\newtheorem*{rep@theorem}{\rep@title}
\newcommand{\newreptheorem}[2]{%
\newenvironment{rep#1}[1]{%
 \def\rep@title{#2 \ref{##1}}%
 \begin{rep@theorem}}%
 {\end{rep@theorem}}}
\newtheorem{defn}{Definition}
\newtheorem{thm}{Theorem}
\newtheorem*{theorem*}{Theorem}
\newtheorem{prop}{Proposition}
\newtheorem{cor}{Corollary}
\newtheorem{remark}{Remark}
\def\inprob{\xrightarrow{p}}
\def\law{\xrightarrow{d}}
\begin{document}

\long\def\symbolfootnote[#1]#2{\begingroup%
\def\thefootnote{\fnsymbol{footnote}}\footnote[#1]{#2}\endgroup}
\newcommand{\keywordsname}{{\small \bf{Keywords}: }}

\title{The Asymptotic Distribution\\ of the \\Determinant of a Random Correlation Matrix}

\author{\begin{tabular}[t]{cc}
A.M. Hanea$^{a,}$ \footnote{Correspondence to: A.M. Hanea, CEBRA, University of Melbourne, Parkville, VIC 3010, Australia, email: anca.hanea@unimelb.edu.au} \   \& G.F. Nane $^{b}$
\end{tabular}\\
\small $^{a}$ \emph{Centre of Excellence for Biosecurity Risk Analysis, University of
Melbourne, Australia}\\
\small $^{b}$ \emph{Delft Institute of Applied Mathematics, Technical University of Delft}
}

\date{}
\maketitle

\begin{abstract}

Random correlation matrices are studied for both theoretical interestingness and importance for applications. The author of \cite{Holmes91} is interested in their interpretation as covariance matrices of purely random signals, the authors of \cite{Qiu2004} employ them in the generation of random clusters for studying clustering methods, whereas the authors of \cite{Johnson1980} use them for studying subset selection in multiple regression, etc. The determinant of a matrix is one of the most basic and important matrix functions, and this makes studying the distribution of the determinant of a random correlation matrix of paramount importance. Our main result gives the asymptotic distribution of the determinant of a random correlation matrix sampled from a uniform distribution over the space of $d \times d$ correlation matrices. Several spin-off results are proven along the way, and an interesting connection with the law of the determinant of general random matrices, proven in \cite{Nguyen2012}, is investigated.

\noindent{\footnotesize \keywordsname  random correlation matrix, determinant, beta distribution, random matrix, regular vines}
\end{abstract}

\renewcommand{\thefootnote}{\arabic{footnote}}

\section{Introduction}\label{intro}

The authors of \cite{Joe2006} and \cite{Lewandowski2009} have studied extensively the problem of generating random correlation matrices uniformly from the space of positive definite correlation matrices. In \cite{Joe2006} it is shown that since a $d-$dimensional positive definite correlation matrix $R=(\rho_{ij})_{i,j=1,\ldots,d}$ can be parametrised in terms of correlations $\rho_{i,i+1}$ and partial correlations $\rho_{i,j;i+1,...,j-1}$ for $(j-i)\geq 2$, and these parameters can independently take values in the interval $(-1,1)$, one can generate a random correlation matrix by choosing independent distributions $F_{ij}$, $1 \leq i< j\leq d$ for these parameters.

Appropriate choices for $F_{ij}$ lead to a joint density for $\rho_{ij : 1 \leq i < j \leq d}$ that is proportional to $det(R)^{\eta - 1}$, where $\eta > 0$. In \cite{Lewandowski2009} it is shown that in this case the joint density is invariant to the order of indexing of variables for the partial correlations, and each $\rho_{ij}$ marginally has a $Beta\left(\eta - 1 + \frac{d}{2}, \eta - 1 + \frac{d}{2}\right)$ distribution on $(-1, 1)$. The uniform density over the set of positive definite correlation matrices is obtained when $\eta = 1$. The proof of this result is formulated using the notion of the partial correlation regular vines. Vines were introduced in \cite{Cooke_97} and
\cite{Bedford_2002}. A vine on $d$ variables is a nested set of
trees. The edges of the $j^{\underline{th}}$ tree are the nodes of
the $(j+1)^{\underline{th}}$ tree. A \emph{regular} vine on $d$ variables is a vine in
which two edges in tree $j$ are joined by an edge in tree $j+1$ only if these edges share a common node. More formally:

\begin{defn}
$\mathcal{V}$ is called a regular vine on $d$ elements if:
\begin{enumerate}
\item $\mathcal{V}=(T_{1},\dots,T_{d-1})$; \item $T_{1}$ is a tree
with nodes $N_{1}=\{1,\dots,d\}$, and edges $E_{1}$ and for
$i=2,\dots,d-1$, $T_{i}$ is a tree with nodes $N_{i}=E_{i-1}$;
\item For $i=2,\dots,d-1$, ${a,b} \in E_{i}$, $\#a \bigtriangleup
b=2$, where $\bigtriangleup$ denotes the symmetric difference. In other words if $a$ and $b$ are nodes of
$T_{i}$ connected by an edge in $T_{i}$, where $a=\{a_{1}, a_{2}\}$, $b=\{b_{1}, b_{2}\}$, then exactly one of the $a_{i}$ equals one of the $b_{i}$
\end{enumerate}
\end{defn}

\noindent For each edge of the vine we distinguish a \emph{constraint}, a \emph{conditioning}, and a \emph{conditioned} set. Variables reachable from an edge, via the membership relation, form its constraint set. If two edges are joined by an edge in the next tree the intersection and symmetric difference of their constraint sets give the conditioning and conditioned sets, respectively.\\
\indent Each regular vine edge may be associated with a partial correlation. A \emph{complete partial correlation vine specification}
is a regular vine with a partial correlation specified for each
edge. A partial correlation vine specification does not uniquely specify a joint distribution\footnote{Moreover a given set of marginal distributions may not be consistent with a given set of partial correlations.}, but there is a joint distribution satisfying the specified information \cite{Bedford_2002}.\\
The property of vines that plays a crucial role in the starting point of this paper is given in the next theorem \cite{Kurowicka2006LA}.

\begin{thm}
Let $D_d$ be the determinant of a $d \times d$ correlation matrix, $R$, of variables $X_1,\cdots, X_d$, with $D_d>0$. For any partial correlation vine:
\begin{equation}\label{factdetvine}
D_d=\prod_{e \in E(\mathcal{V})}{\left(1-
\rho_{e_{1},e_{2};\mathcal{C}_{e}}^{2}\right)},
\end{equation}
\noindent where $E(\mathcal{V})$ is the set of edges of the vine $\mathcal{V}$, $\mathcal{C}_{e}$ denotes the conditioning set associated with edge $e$, and $\{e_1,e_2\}$ is the conditioned set of $e$.
\end{thm}

\noindent Vines are actually a way of factorising the determinant of the correlation matrix in terms of partial correlations. As mentioned earlier, the uniform density over the set of positive definite correlation matrices is invariant to the order of indexing of variables for the partial correlations, and each $\rho_{ij}$ marginally follows a $Beta\left(\frac{d}{2}, \frac{d}{2}\right)$ distribution on $(-1, 1)$.\\

\begin{remark} It is worth mentioning that if the $\rho_{ij}$'s would be independent, then they would individually approach $0$, as the dimension approaches infinity. As a consequence the determinant of the correlation matrix would then approach $1$. The main theorem from Section \ref{teoreme} reveals a completely different behaviour of the determinant of a random correlation matrix.
\end{remark}

\indent It is shown that each partial correlation $\rho_{i,j;K}$ from factorisation (\ref{factdetvine}) has a $Beta\left(\frac{d-k}{2}, \frac{d-k}{2}\right)$ distribution on $(-1, 1)$, where $k$ is the cardinality of the conditioning set $K$. It follows that each $(\rho_{i,j;K})^2$ has a $Beta\left(\frac{1}{2}, \frac{d-k}{2}\right)$ distribution on $(0, 1)$ and each $\left(1-\rho_{i,j;K}^2\right)$ is distributed according to a $Beta\left(\frac{d-k}{2}, \frac{1}{2}\right)$ distribution. Rearranging the terms from factorisation (\ref{factdetvine}), we rewrite $D_d$ in terms of independent $Beta\left(\frac{d-k}{2}, \frac{1}{2}\right)$ variables on $(0, 1)$ as:

\begin{equation}\label{det_as2prod}
D_d = \prod_{j=0}^{d-2}\prod_{i=j}^{d-2}\ B_i,
\end{equation}

\noindent where $B_i \sim Beta\left(1+\frac{i}{2}, \frac{1}{2}\right)$. \\

Throughout the paper, we will denote by $B_i$ Beta distributed random variables on $(0,1)$. Although determined by $i$(or $j$), the parameter of the Beta variables can differ. This abuse of notation will be compensated by an increased clarity of the exposition.

\section{Main Results}\label{teoreme}

\begin{thm}\label{d_1prod}
For a uniform distribution over the space of $d \times d$ correlation matrices, the marginal
distribution of each correlation is $Beta\left(\frac{d}{2}, \frac{d}{2}\right)$ on $(-1, 1)$. Take $R$ to be such a correlation matrix and let $D_d$ be its determinant. Then $D_d$ can be written as a product of $(d-1)$ independent Beta distributed random variables:

\begin{equation}\label{det_as1prod}
D_d = \prod_{j=1}^{d-1}\ B_j,
\end{equation}
\noindent where $B_j \sim Beta\left(\frac{j+1}{2}, \frac{d-j}{2}\right), j=1, \ldots, d-1.$
\end{thm}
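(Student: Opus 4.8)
The plan is to recognise that the single product (\ref{det_as1prod}) and the double product (\ref{det_as2prod}) are two different groupings of one and the same finite collection of independent $Beta(\cdot,\tfrac12)$ factors, and to pass between the groupings by means of the multiplicative identity for Beta variables. I would first record the elementary fact that if $X\sim Beta(a,b)$ and $Y\sim Beta(a+b,c)$ are independent, then $XY\sim Beta(a,b+c)$; writing $Beta(a,b)\stackrel{d}{=}G_{a}/(G_{a}+G_{b})$ for independent Gamma variables, this is immediate from $XY=G_{a}/(G_{a}+G_{b}+G_{c})$ together with the Gamma--Beta independence. Iterating the identity gives, for any $a>0$ and integer $n\geq 1$,
\begin{equation*}
\prod_{l=0}^{n-1}Beta\!\left(a+\tfrac{l}{2},\tfrac12\right)\stackrel{d}{=}Beta\!\left(a,\tfrac{n}{2}\right),
\end{equation*}
the factors on the left being mutually independent.

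Next I would express $D_{d}$ as a product of independent $(1-\rho^{2})$-factors indexed by the edges of a regular vine, using the factorisation (\ref{factdetvine}) together with the distributional facts quoted just before (\ref{det_as2prod}): an edge whose conditioning set has cardinality $k$ contributes a factor distributed as $Beta(\tfrac{d-k}{2},\tfrac12)$. The only structural input required is the standard edge count of a regular vine, namely that its $(k+1)$-st tree has exactly $d-1-k$ edges, all with conditioning sets of size $k$. Collecting factors according to the value $m=d-k$ then rewrites the expansion of (\ref{det_as2prod}) as
\begin{equation*}
D_{d}\stackrel{d}{=}\prod_{m=2}^{d}\left[Beta\!\left(\tfrac{m}{2},\tfrac12\right)\right]^{m-1},
\end{equation*}
a multiset of $\binom{d}{2}$ independent factors.

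Finally I would partition this multiset into $d-1$ independent chains and apply the telescoping identity to each. For $j=1,\dots,d-1$, let chain $j$ consist of one copy of $Beta(\tfrac{m}{2},\tfrac12)$ for each $m=j+1,\dots,d$; taking $a=\tfrac{j+1}{2}$ and $n=d-j$ in the identity, its product is distributed as $Beta(\tfrac{j+1}{2},\tfrac{d-j}{2})$, which is precisely the $j$-th factor of (\ref{det_as1prod}). Because the chains use disjoint subsets of an independent family, their products are independent, and assembling them yields the claimed representation (\ref{det_as1prod}).

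The step I expect to demand the most care is the combinatorial matching in this last paragraph: one must check that the partition is exact, i.e.\ that no factor is used twice and none is left over. A copy of $Beta(\tfrac{m}{2},\tfrac12)$ is claimed exactly by those chains with $j\leq m-1$, hence by $m-1$ chains, which agrees with the available multiplicity $m-1$; everything else reduces to the Beta multiplicative lemma and the elementary tree structure of a regular vine.
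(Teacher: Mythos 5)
Your proof is correct and follows essentially the same route as the paper: both start from the vine-derived collection of $\binom{d}{2}$ independent $Beta\left(\cdot,\frac{1}{2}\right)$ factors (the paper's equation (\ref{det_as2prod})) and collapse each chain of such factors via the telescoping Beta multiplication identity $\prod_i Beta(a_i,b_i)=Beta\left(a_1,\sum_i b_i\right)$ valid when $a_{i+1}=a_i+b_i$, yielding exactly the factors of (\ref{det_as1prod}). The only differences are presentational: you prove the identity via Gamma representations and check the chain partition combinatorially, whereas the paper cites the identity from the literature and reads the grouping off directly from the double product (\ref{det_as2prod}).
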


\begin{proof}
\noindent Let $D_d$ be represented as in equation \eqref{det_as2prod}. The variables  $B_i \sim Beta\left(1+\frac{i}{2}, \frac{1}{2}\right)$ are independent. Let $a_i=1+\frac{i}{2}$ and $b_i=\frac{1}{2}$. It follows that $a_{i+1}=a_i+b_i$. The authors of \cite{NadarajahGupta} prove that for such variables:
\begin{equation}\nonumber
\prod_{i} B_i \sim Beta \left(a_1, \sum_{i} b_i\right).
\end{equation}

\noindent Using the above property we reduce the second product in \eqref{det_as2prod} to one variable with known distribution:

\begin{equation}\nonumber
\prod_{i=j}^{d-2}\ B_i \sim Beta\left(1+\frac{j}{2}, \sum_{i=j}^{d-2} \frac{1}{2}\right) = Beta\left(1+\frac{j}{2}, \frac{d-j-1}{2}\right).
\end{equation}

\noindent Then:

\begin{equation}\nonumber
D_d = \prod_{j=1}^{d-1} B_j,
\end{equation}

\noindent where $ B_j \sim Beta\left(\frac{j+1}{2}, \frac{d-j}{2}\right)$.
\end{proof}

\begin{remark}
\noindent Observe that the sum of the Beta distribution parameters in Theorem \ref{d_1prod} is constant and equal to $S = \frac{d+1}{2}$; then the expression of the determinant in equation \eqref{det_as1prod} can be written as the product of  $B_j \sim Beta\left(S-\frac{j}{2}, \frac{j}{2}\right)$ distributed variables, $j=1,\ldots,d-1$.
\end{remark}

\vspace{0.2in}

\noindent Rearranging the terms and looking separately at odd and even values of $d$ we obtain:

\begin{equation*}
D_d   = \begin{cases} B_d \cdot \prod_{j=1}^{\lfloor \frac{d-2}{2} \rfloor}\left( B_j \cdot \widetilde{B_j}\right)   & , d=2k\\
             B_d \cdot B_k \cdot \prod_{j=1}^{\lfloor \frac{d-2}{2} \rfloor}\left( B_j \cdot \widetilde{B_j} \right)   & , d=2k+1,
           \end{cases}
\end{equation*}

\noindent where $B_d \sim Beta\left(\frac{d}{2},\frac{1}{2}\right), B_k \sim Beta\left(\frac{k+1}{2},\frac{k+1}{2}\right), B_j \sim Beta\left(\frac{j+1}{2}, \frac{d-j}{2}\right)$ and $\widetilde{B_j} \sim Beta\left(\frac{d-j}{2}, \frac{j+1}{2}\right)$.

\vspace{0.2in}

\begin{cor}\label{cor1}
Consider the $d \times d$ correlation matrices such that each $\rho_{ij}$ marginally has a Beta distribution on $(-1, 1)$, with both parameters equal to $\left(\eta - 1 + \frac{d}{2}\right)$, for $\eta > 0$. Then the determinant $D_d$ of
such correlation matrix can be written as a product of $(d-1)$ independent Beta distributed random variables $B_j$, i.e. $D_d = \prod_{j=1}^{d-1}\ B_j$, where $B_j \sim Beta\left(\eta + \frac{j-1}{2}, \frac{d-j}{2}\right)$.
\end{cor}

\vspace{0.2in}

\begin{thm}[Main Theorem]
\label{mainth}
For $d>0$, consider the uniform distribution over the space of $d \times d$ correlation matrices. Let $R_d$ be a random correlation matrix and let $D_d$ be its determinant. Then, as $d\rightarrow\infty$,
\[
D_d^{1/d} \inprob \frac{1}{e}
\]
\end{thm}

\begin{proof}
Consider the expression for $D_d$ given in equation \eqref{det_as1prod}, where  $B_j \sim Beta\left(\frac{j+1}{2}, \frac{d-j}{2}\right)$. The variables $B_j$ are independent, so the expectation of  $D_d^{1/d}$ can be calculated as a product of expectations of $Beta$ variables raised to the power $\frac{1}{d}$.\\
 \indent Let $\alpha = \frac{j+1}{2}$, $\beta = \frac{d-j}{2}$, $S = \alpha+\beta=\frac{d+1}{2}$. Let $B (\alpha,\beta)$ denote the Beta function, and $\Gamma(\cdot)$ denote the Gamma function. Then:

\begin{align*}
E\left(D_d^{1/d}\right) &= E \left(\prod_{j=1}^{d-1} B_j^{1/d}\right)\\
&=\prod_{j=1}^{d-1} E\left(B_j^{1/d}\right)\\
&= \prod_{j=1}^{d-1}\frac{B(\alpha+\frac{1}{d}, \beta)}{B(\alpha,\beta)} = \prod_{j=1}^{d-1} \frac{\Gamma(\alpha + \frac{1}{d}) \cdot \Gamma(\alpha + \beta)}{\Gamma(\alpha ) \cdot \Gamma(\alpha + \beta + \frac{1}{d})}\\
&= \left[ \frac{\Gamma(S)}{\Gamma(S+\frac{1}{d})} \right]^{d-1} \cdot  \prod_{j=1}^{d-1} \frac{\Gamma\left(\frac{j+1}{2}+\frac{1}{d}\right)}{\Gamma(\frac{j+1}{2})}.
\end{align*}

\noindent Using results originally formulated in \cite{erdelyi1951} and refined in \cite{Laforgia2012}, we can write:

\begin{equation}
\label{frac_gamma}
\frac{\Gamma(z+a)}{\Gamma(z+b)} = z^{a-b} \left[1+ O\left(z^{-1}\right)\right],
\end{equation}

\noindent for $a,b \geq 0$ and $z\to\infty$.\\

\begin{align*}
E\left(D_d^{1/d}\right) &= \left(S^{-\frac{1}{d}}\right)^{d-1} \cdot \prod_{j=1}^{d-1} \left( \frac{j+1}{2}\right)^{\frac{1}{d}}\cdot\left[1+ O\left(z^{-1}\right)\right)]{d-1}\\
&=\frac{2^{\frac{d-1}{d}}}{(d+1)^{\frac{d-1}{d}}} \cdot \frac{(d!)^{\frac{1}{d}}}{2^{\frac{d-1}{d}}}\cdot\left[+ O\left(z^{-1}\right)\right]{d-1}.
\end{align*}

\noindent Furthermore, by using Stirling's approximation,

\begin{equation}
\label{stirling}
d!= \sqrt{2 \pi d}\left(\frac{d}{e}\right)^d \left[ 1+O\left(d^{-1}\right) \right],
\end{equation}
\noindent then

\begin{align}
\label{exp_det_1/d}
E \left(D_d^{1/d}\right) &= \frac{d^{\frac{1}{2d}+1}}{(d+1)^{1-\frac{1}{d}}}\cdot \left(\sqrt{2\pi}\right)^{\frac{1}{d}} \cdot \frac{1}{e}\cdot\left[ 1+O\left(d^{-1}\right) \right]^{\frac{d-1}{d}}\\
&=\frac{d}{d+1} \cdot d^{\frac{1}{2d}}\cdot (d+1)^{\frac{1}{d}}  \cdot \left(\sqrt{2\pi}\right)^{\frac{1}{d}} \cdot \frac{1}{e}\cdot\left[ 1+O\left(d^{-1}\right) \right]^{\frac{d-1}{d}}.\nonumber
\end{align}

\noindent It follows immediately that
\begin{equation}
\label{exp_det_lim}
\lim_{d\to\infty}E\left(D_d^{1/d}\right)= \frac{1}{e}.
\end{equation}
\noindent Similarly,
\begin{equation}
\label{exp_det_sq}
\lim_{d\to\infty} E\left[\left(D_d^{1/d}\right)^2\right]=\lim_{d\to\infty} E\left(D_d^{2/d}\right)= \frac{1}{e^2}.
\end{equation}
By \eqref{exp_det_lim} and \eqref{exp_det_sq},
\begin{equation}
\label{var_det}
\lim_{d\to\infty} var\left(D_d^{1/d}\right)=\lim_{d\to\infty} \left[E\left(D_d^{1/d}\right)\right]^2 - \lim_{d\to\infty} E\left[\left(D_d^{1/d}\right)^2\right]=0.
\end{equation}
By \eqref{exp_det_lim} and \eqref{var_det}, and by using Chebyshev's inequality, the proof is complete.

\end{proof}

\noindent Since convergence in probability implies convergence in distribution, the following result emerges.

\begin{cor}\label{conv_distr}
For a uniform distribution over the space of $d \times d$ correlation matrices, consider a random correlation matrix $R_d$ and let $D_d$ be its determinant. Consider the sequence of random variables $(D_d)^{1/d}$, for $d=1,2,\ldots$. Then, this sequence converges in distribution to a degenerate random variable $D=\frac{1}{e}$,
\[
D_d^{1/d}\law \frac{1}{e}.
\]
\end{cor}

\noindent The result of Theorem \ref{mainth} also holds for the space of correlation matrices where each element marginally distributed according to a $Beta\left(\eta - 1 + \frac{d}{2}, \eta - 1 + \frac{d}{2}\right)$ distribution on $(-1, 1)$. This is stated in the corollary below and the proof can be found in the Appendix.

\begin{cor}\label{main_cor}
Consider the $d \times d$ correlation matrices such that each $\rho_{ij}$ marginally follows a Beta distribution on $(-1, 1)$, with both parameters equal to $\left(\eta - 1 + \frac{d}{2}\right)$, for $\eta > 0$. Let $R_d$ be such a matrix and $D_{d}$ its determinant. Then $D_d^{1/d} \to \frac{1}{e}$, as $d\rightarrow\infty$, in probability.
\end{cor}

\begin{remark}
Each correlation in the correlation matrix marginally has a $Beta\left(\frac{d}{2}, \frac{d}{2}\right)$ distribution on $(-1, 1)$. When $d\to\infty$, each $Beta\left(\frac{d}{2}, \frac{d}{2}\right)$ distribution will approach $0$. In other words, if the entries in the correlation matrix would be independent, the determinant would approach $1$ as the dimension increases. The above theorem reveals an opposite behaviour, namely the convergence of the determinant towards its other bound, namely $0$. This behaviour is induced by the dependence amongst the entries of the correlation matrix.
\end{remark}

\section{Spin-off Results}

The asymptotic behaviour of the $d^{th}$ root of the first two moments of $D_d$ is formulated in the following proposition. The proof of this proposition can be found in the Appendix.

\begin{prop}\label{prop1}
For a uniform distribution over the space of $d \times d$ correlation matrices, the marginal
distribution of each correlation follows a $Beta(d/2, d/2)$ distribution on $(-1, 1)$, and the determinant $D_d$ of
a correlation matrix follows the expression from equation (\ref{det_as1prod}). Then:
\begin{enumerate}
\item $\displaystyle{\lim_{d\longrightarrow\infty}} \left(E\left(D_d\right)\right)^{\frac{1}{d}}= \frac{1}{e} = \lim_{d\longrightarrow\infty} E\left(D_d^{1/d}\right)$
\item $\displaystyle{\lim_{d\longrightarrow\infty}} \left(var\left(D_d\right)\right)^{\frac{1}{d}} = \frac{1}{e^2} = \lim_{d\longrightarrow\infty}\left[E\left(D_d\right)^{\frac{1}{d}}\right]^2$.
\end{enumerate}
\end{prop}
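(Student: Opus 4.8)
The plan is to compute the first two moments $E(D_d)$ and $E(D_d^2)$ in closed form, exploiting the independence of the Beta factors in equation (\ref{det_as1prod}), and then extract their $d$-th-root asymptotics via Stirling's approximation, exactly in the spirit of the proof of Theorem \ref{mainth}. The only genuinely delicate point will be that the variance is a \emph{difference} of two quantities each of whose $d$-th root tends to $1/e^2$, and the $d$-th root does not distribute over subtraction.

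First I would establish part 1. By independence, $E(D_d) = \prod_{j=1}^{d-1} E\bigl[\mathrm{Beta}(\tfrac{j+1}{2}, \tfrac{d-j}{2})\bigr]$. The mean of a $\mathrm{Beta}(\alpha,\beta)$ variable is $\alpha/(\alpha+\beta)$, and with $\alpha+\beta = S = \frac{d+1}{2}$ this is $(j+1)/(d+1)$, so the product telescopes to $E(D_d) = d!/(d+1)^{d-1}$. Taking the $d$-th root and applying $(d!)^{1/d}\sim d/e$ together with $(d+1)^{(d-1)/d}\sim d+1$ gives $(E(D_d))^{1/d}\to 1/e$. The matching equality $\lim_{d\to\infty} E(D_d^{1/d}) = 1/e$ is precisely the content of the Main Theorem.

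For part 2 I would compute $E(D_d^2) = \prod_{j=1}^{d-1} E[\mathrm{Beta}^2]$ from the second moment $\frac{\alpha(\alpha+1)}{S(S+1)} = \frac{(j+1)(j+3)}{(d+1)(d+3)}$, which telescopes to $E(D_d^2) = \frac{d!\,(d+2)!/6}{[(d+1)(d+3)]^{d-1}}$. To deal with the difference, I would factor $\mathrm{var}(D_d) = (E(D_d))^2\bigl(R_d - 1\bigr)$, where $R_d = E(D_d^2)/(E(D_d))^2 = \prod_{j=1}^{d-1}\frac{(j+3)(d+1)}{(j+1)(d+3)}$. This product again telescopes, simplifying to $R_d = \bigl(\frac{d+1}{d+3}\bigr)^{d-1}\frac{(d+1)(d+2)}{6}$, which behaves like $e^{-2}d^2/6$ and in particular grows only polynomially in $d$.

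The crux of the argument, and the main obstacle, is therefore controlling $(R_d - 1)^{1/d}$: since $R_d - 1$ grows only polynomially, its $d$-th root tends to $1$. Combining this with $\bigl((E(D_d))^2\bigr)^{1/d}\to 1/e^2$ from part 1 yields $(\mathrm{var}(D_d))^{1/d} = \bigl((E(D_d))^{1/d}\bigr)^2\,(R_d-1)^{1/d}\to \frac{1}{e^2}\cdot 1 = \frac{1}{e^2}$. The final equality $\bigl((E(D_d))^{1/d}\bigr)^2 \to 1/e^2$ is then immediate from part 1, completing the proposition.
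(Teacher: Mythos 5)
Your proposal is correct, and its skeleton coincides with the paper's: both proofs compute $E(D_d)=\frac{d!}{(d+1)^{d-1}}$ and $E(D_d^2)=\frac{d!\,(d+2)!}{6\,[(d+1)(d+3)]^{d-1}}$ as telescoping products of Beta moments, and both recognize that the only delicate point is that the variance is a difference of two exponentially small terms. Where you genuinely diverge is in how that difference is tamed. The paper factors $\mathrm{var}(D_d)=\frac{(d+1)!}{(d+1)^{d}}\bigl(\frac{(d+3)!}{6(d+3)^{d}}-\frac{(d+1)!}{(d+1)^{d}}\bigr)$ and then applies Stirling's approximation to each factorial inside the bracket, exhibiting $\mathrm{var}(D_d)$ as $\frac{2\pi(d+1)^{2}}{e^{2(d+1)}}$ times a polynomially growing factor whose $d$-th root tends to $1$. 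You instead write $\mathrm{var}(D_d)=(E(D_d))^{2}(R_d-1)$ with $R_d=E(D_d^2)/(E(D_d))^{2}$, and observe that $R_d$ telescopes exactly to $\frac{(d+1)(d+2)}{6}\bigl(\frac{d+1}{d+3}\bigr)^{d-1}\sim\frac{d^{2}}{6e^{2}}$, so $(R_d-1)^{1/d}\to 1$ with no asymptotic expansion needed; part 2 then follows from part 1 alone. Your route is cleaner in two respects: Stirling enters only once (for $(E(D_d))^{1/d}$), whereas the paper invokes it a second time inside the variance bracket and manipulates the two exponentially small terms by hand; and your computation of $E(D_d)$ is a single telescoping product, avoiding the paper's separate even/odd-$d$ case split. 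Your appeal to the Main Theorem for the equality $\lim_{d\to\infty}E(D_d^{1/d})=\frac{1}{e}$ also matches the paper, which does not re-derive it. One small point you should state explicitly: $R_d-1>0$ for all large $d$ (clear, since $R_d\to\infty$), so the factorization and the extraction of the $d$-th root are legitimate.
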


\noindent The first equality from the above proposition suggests a linear behaviour caused by the degenerate distribution of the scaled determinant. The same behaviour will be later observed in a different context.

\begin{cor}\label{mom_det}
The first two moments of the distribution of the determinant of a $d \times d$ random correlation matrix can be written as follows, for very large $d$:

\begin{enumerate}
\item $E(D_d) = \left[E\left(D_d^{1/d}\right)\right]^d + O\left(\frac{1}{d}\right)$
\item $var(D_d) = \left[E(D_d)\right]^2 + O\left(\frac{1}{d}\right)$.
\end{enumerate}
\end{cor}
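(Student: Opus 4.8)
The plan is to obtain both approximations as immediate consequences of Proposition \ref{prop1}, reading that result as a statement about the exponential growth rate of the moments of $D_d$. The key observation is that if two positive sequences share a common $d$-th-root limit, then their logarithms agree to leading order in $d$, and this is precisely the sense in which the symbol $\thickapprox$ is to be understood here.

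For the first part, I would start from Proposition \ref{prop1}(1), which asserts that both $(E(D_d))^{\frac{1}{d}}$ and $E(D_d^{\frac{1}{d}})$ tend to $\frac{1}{e}$ as $d\longrightarrow\infty$. Since the two sequences share the same limit, for large $d$ we have $(E(D_d))^{\frac{1}{d}} \thickapprox E(D_d^{\frac{1}{d}})$; raising both sides to the power $d$ then yields $E(D_d) \thickapprox \left(E(D_d^{\frac{1}{d}})\right)^d$, which is the claim.

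For the second part I would proceed identically using Proposition \ref{prop1}(2). Here one first notes the elementary identity $\left((E(D_d))^{\frac{1}{d}}\right)^2 = \left((E(D_d))^2\right)^{\frac{1}{d}}$, so that the proposition in fact states that both $(var(D_d))^{\frac{1}{d}}$ and $\left((E(D_d))^2\right)^{\frac{1}{d}}$ converge to $\frac{1}{e^2}$. Equating these two $d$-th roots at large $d$ and raising to the $d$-th power gives $var(D_d) \thickapprox (E(D_d))^2$, completing the corollary.

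The main subtlety, and the point I would flag explicitly, is that $\thickapprox$ cannot be read as the statement that the ratios $E(D_d)/\left(E(D_d^{\frac{1}{d}})\right)^d$ or $var(D_d)/(E(D_d))^2$ tend to $1$. Taking $d$-th powers amplifies any subexponential discrepancy between the two $d$-th roots, so equality of the $\frac{1}{e}$-type limits controls only the leading exponential order and not the finer prefactors. I would therefore present the corollary, as the authors do, as an approximation valid "for very large $d$" at the level of the exponential growth rate, which is exactly the information that Proposition \ref{prop1} supplies; establishing a stronger ratio-to-one statement would require sharper asymptotics of $E(D_d)$ and $var(D_d)$ than are available from the $d$-th-root limits alone.
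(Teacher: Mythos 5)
Your proposal matches the paper's own (implicit) derivation: Corollary \ref{mom_det} is stated as an immediate consequence of Proposition \ref{prop1}, exactly as you argue, with $\thickapprox$ read at the level of the common $d$-th-root limit rather than as ratio convergence. Your explicit caveat on this point is well placed --- indeed, Proposition \ref{prop2} shows that $var(D_d)/\left(E(D_d)\right)^2$ grows like $d^2$, so the exponential-growth-rate reading is the only tenable interpretation of the second approximation.
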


\noindent The first moment of $D_d$ can be calculated exactly as a function of the dimension $d$, whereas it is easier to calculate only an approximation of the second moment. Their respective expressions are given in the following proposition (see Appendix for proofs).

\begin{prop}\label{prop2}
For a uniform distribution over the space of $d \times d$ correlation matrices, the marginal
distribution of each correlation is $Beta\left(\frac{d}{2}, \frac{d}{2}\right)$ on $(-1, 1)$ and the determinant $D_d$ of
a correlation matrix follows the expression from equation (\ref{det_as1prod}). Then:
\begin{enumerate}
\item $E(D_d)=\frac{d!}{(d+1)^{d-1}}$
\item $var(D_d) = \frac{1}{6} \cdot \frac{(d+1)!}{(d+1)^d} \cdot \frac{(d+3)!}{(d+3)^d}-\left[\frac{(d+1)!}{(d+1)^d}\right]^2$.
\end{enumerate}
\end{prop}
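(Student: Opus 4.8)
The plan is to exploit the factorisation (\ref{det_as1prod}) together with the independence of the Beta factors, so that both moments reduce to products of elementary Beta moments which can then be evaluated in closed form and, for the variance, simplified via Stirling's approximation. Throughout I write $\alpha=\frac{j+1}{2}$, $\beta=\frac{d-j}{2}$, $\alpha+\beta=S=\frac{d+1}{2}$.

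For part 1 I would start from the mean of a $Beta(\alpha,\beta)$ variable, $\frac{\alpha}{\alpha+\beta}$, so that the $j$-th factor contributes $\frac{(j+1)/2}{(d+1)/2}=\frac{j+1}{d+1}$. By independence,
\[
E(D_d)=\prod_{j=1}^{d-1}\frac{j+1}{d+1}=\frac{\prod_{j=1}^{d-1}(j+1)}{(d+1)^{d-1}}=\frac{d!}{(d+1)^{d-1}},
\]
since $\prod_{j=1}^{d-1}(j+1)=2\cdot3\cdots d=d!$. This gives part 1 exactly, with no approximation involved.

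For part 2 I would use the second moment $E\big[Beta(\alpha,\beta)^2\big]=\frac{\alpha(\alpha+1)}{(\alpha+\beta)(\alpha+\beta+1)}$, which under the substitution above makes the $j$-th factor $\frac{(j+1)(j+3)}{(d+1)(d+3)}$. Hence, using $\prod_{j=1}^{d-1}(j+1)=d!$ and $\prod_{j=1}^{d-1}(j+3)=4\cdot5\cdots(d+2)=\frac{(d+2)!}{6}$,
\[
E(D_d^2)=\prod_{j=1}^{d-1}\frac{(j+1)(j+3)}{(d+1)(d+3)}=\frac{d!\,(d+2)!}{6\,[(d+1)(d+3)]^{d-1}},
\]
and the variance is the exact difference $var(D_d)=E(D_d^2)-(E(D_d))^2$.

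The remaining work, and the main obstacle, is the asymptotic simplification of this difference. I would factor $(E(D_d))^2$ out and write
\[
var(D_d)=(E(D_d))^2\left[\frac{(d+1)(d+2)}{6}\Big(\tfrac{d+1}{d+3}\Big)^{d-1}-1\right],
\]
then apply Stirling to $(d!)^2$ inside $(E(D_d))^2$ and handle the power $\big(\tfrac{d+1}{d+3}\big)^{d-1}$ through its logarithm. The delicate part is the bookkeeping of the exponential contributions: the Stirling factor $e^{-2d}$ together with $\big(\tfrac{d}{d+1}\big)^{2d}\to e^{-2}$ produces the $e^{-2(d+1)}$ in the claimed prefactor, while the bracket's $\big(\tfrac{d+1}{d+3}\big)^{d-1}\to e^{-2}$ supplies the separate $e^{-2}$. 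Because the bracket grows like $d^2$, the $-1$ is of lower order and may be dropped, so $var(D_d)\approx E(D_d^2)$. Collecting the polynomial factors—being careful to retain the sub-leading correction $\sqrt{\tfrac{d+1}{d+3}}$ coming from the $\sqrt{d(d+2)}$ prefactors in Stirling, and using $d(d+1)(d+2)\approx d^3$—yields $var(D_d)\approx\frac{2\pi(d+1)^2}{e^{2(d+1)}}\big(\frac{c\,d^3}{e^2}\big)$ with $c=\frac16\sqrt{\tfrac{d+1}{d+3}}$. The only genuine care required is tracking which polynomial prefactors to keep versus absorb into the stated $d^3$ and $(d+1)^2$ groupings, so that the constant $c$ emerges exactly as claimed.
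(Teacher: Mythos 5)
Your proposal is correct and takes essentially the same route as the paper: both parts come from the exact moments of the independent $Beta\left(\frac{j+1}{2},\frac{d-j}{2}\right)$ factors (the paper computes the mean via its even/odd rearrangement of the product, but the calculation is identical to yours, and its variance step is likewise $E(D_d^2)-(E(D_d))^2$ with the cubic term dominating after Stirling). The only discrepancy is the sub-leading bookkeeping behind $c$: your route pins down $c$ only up to factors tending to $1$, but that matches the paper's own precision, since its intermediate step carries $\frac{1}{6}\sqrt{\frac{d+3}{d+1}}$ while its final statement defines $c=\frac{1}{6}\sqrt{\frac{d+1}{d+3}}$.
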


\noindent Form a completely different perspective, it is interesting to notice that $E(D_d)$ can be expressed as an elementary norm \cite{Hardy_L_P}.

\begin{remark}\label{PHL}
Let $E(D_d)$ be approximated as in Corollary \ref{mom_det} (see Appendix). This approximation is the continuous version of the elementary r-norm, where $r=\frac{1}{d}$. Using Theorem 187 from \cite{Hardy_L_P}, one can prove:\\
\begin{equation*}
E(D_d) = e^{E \left[ \ln(D_d)\right]} + O\left(\frac{1}{d}\right).
\end{equation*}
\end{remark}

\noindent More results about the logarithm of the determinant of the correlation matrix are investigated in the following section.

\section{The Logarithm of the Determinant of a Random Correlation Matrix}

Since sums of independent random variables play a more important role than products, and they have been studied more thoroughly, it would be convenient to investigate the behavior of the logarithm of the determinant of the correlation matrix, rather than of the determinant itself. Let us denote $Y_d = \ln{D_d}$ and look at the first two moments of this new random variable.

Consider $B_j \sim Beta\left( \frac{j+1}{2}, \frac{d-j}{2}\right)$, $d=2k$, $\psi$ the digamma function, and $\gamma$ the Euler - Mascheroni constant\footnote{The calculations will follow the same lines for an odd dimension.}.

\begin{align*}
E(Y_d)&= \sum_{j=1}^{d-1} E\left(\ln B_j \right)=  \sum_{j=1}^{d-1} \left[ \psi\left( \frac{j+1}{2}\right) - \psi\left( \frac{d+1}{2}\right)\right]\\
&= \psi(1)+\dots +\psi(k)+\psi\left(1+\frac{1}{2} \right)+ \dots +\psi\left((k-1)+\frac{1}{2} \right) - (d-1)\psi\left( \frac{d+1}{2}\right)\\
&= -\gamma(d-1)-\ln(2)(d-2)+\sum_{i=1}^{k-1}\left[ \frac{4i-1}{(2i-1)i}(k-i)\right]-(d-1)\psi \left(k + \frac{1}{2} \right).
\end{align*}

\noindent Rewriting  $\psi\left(k +\frac{1}{2}\right)$ as $\gamma -2\ln(2)+ \sum_{i=1}^k \frac{2}{2i-1}$ and rearranging and cancelling terms we obtain:

\begin{equation}
\label{Y_d}
E(Y_d)= -d\left[\sum_{i=1}^k \left(\frac{1}{2i-1} - \frac{1}{2i} \right) - \ln(2) + 1\right] + \sum_{i=1}^k \frac{1}{2i-1}.
\end{equation}
We will further provide lower and upper bounds for $E(Y_d)$. Firstly, note the the first sum is a partial sum of an alternating series that converges to $\ln(2)$:

\begin{equation*}
\sum_{i=1}^k \left(\frac{1}{2i-1} - \frac{1}{2i} \right)=\sum_{i=1}^{2k}\frac{(-1)^{k-1}}{k}\equiv S_{2k}.
\end{equation*}
The partial sum $S_{2k}$ approximates $\ln(2)$ with an error that can be bounded be the next term in the series, that is:

\begin{equation}
\label{bound_sk}
\left| S_{2k}-\ln(2) \right| \leq \frac{1}{2k+1}.
\end{equation}
For the second sum in \eqref{Y_d}, we use that, for any $N>1$:

\begin{equation*}
\ln(N+1)<\sum_{i=1}^N\frac{1}{i}\leq 1+\ln(N).
\end{equation*}
Then:

\begin{equation*}
\sum_{i=1}^k \frac{1}{2i-1}>1+\sum_{i=1}^k \frac{1}{2i}>1+\frac{1}{2}\ln(k+1).
\end{equation*}
Similarly,

\begin{equation*}
\sum_{i=1}^k \frac{1}{2i-1}=\sum_{i=0}^{k-1} \frac{1}{2i+1}<1+\sum_{i=1}^{k-1}\frac{1}{2i}\leq 1+\frac{1}{2}[1+\ln(k-1)].
\end{equation*}
This gives that:

\begin{equation}
\label{bound_oddseries}
1+\frac{1}{2}\ln(k+1)<\sum_{i=1}^k \frac{1}{2i-1}\leq 1+\frac{1}{2}[1+\ln(k-1)].
\end{equation}
By \eqref{bound_sk}:

\begin{equation*}
-\frac{1}{2k+1}+1\leq S_{2k}-\ln(2)+1\leq\frac{1}{2k+1}+1,
\end{equation*}
which gives:

\begin{equation*}
-d\left( \frac{1}{d+1}+1 \right)\leq -d[S_{2k}-\ln(2)+1]\leq -d\left(-\frac{1}{d+1}+1\right).
\end{equation*}
Together with \eqref{bound_oddseries}, we get:

\begin{equation}
\label{lu_bound_E_Y_d}
-d\left( \frac{1}{d+1}+1 \right)+ 1+\frac{1}{2}\ln\left(\frac{d}{2}+1\right)< E(Y_d)\leq -d\left(-\frac{1}{d+1}+1\right)+1+\frac{1}{2}\left[1+\ln\left(\frac{d}{2}-1\right)\right].
\end{equation}
To get an approximation for $E(Y_d)$, we can use the result in Theorem 3.2 in \cite{Apostol_1976}:

\begin{equation}
\label{harmonic_no}
\sum_{i=1}^n\frac{1}{i}=\ln(n)+\gamma+O\left(\frac{1}{n}\right).
\end{equation}
This gives:

\begin{equation}
E(Y_d)=-d+\frac{1}{2}\ln\left(\frac{d}{2}\right)+1+\frac{\gamma}{2}+o(1).
\end{equation}
It would seem that we could use the approximation $E(Y_d)=-d+o(1)$. Nonetheless, the simulations in the following section will show that this approximation is not accurate.
\begin{remark}
Using Remark \ref{PHL} and the above result we obtain: $E(D_d) = e^{E \left( Y_d\right)} + O\left(\frac{1}{d}\right) \to 0$, as $d \to \infty$.
\end{remark}

\noindent To calculate the variance of $Y_d$ we need the first derivative of the digamma function, also known as the trigamma function, denoted by $\psi_1$. Then:

\begin{align*}
var(Y_d)&= \sum_{j=1}^{d-1} var\left(\ln B_j \right)=  \sum_{j=1}^{d-1} \left[ \psi_1\left( \frac{j+1}{2}\right) - \psi_1\left( \frac{d+1}{2}\right)\right]\\
&=k\frac{\pi^2}{6}+(k-1)\frac{\pi^2}{2}-\sum_{i=1}^{k}\left[(k-i)\left(\frac{1}{i^2}+\frac{4}{(2i-1)^2}\right)\right] - (d-1)\psi_1\left(\frac{d+1}{2} \right).
\end{align*}
\noindent By rewriting $\psi_1\left(\frac{d+1}{2} \right)$  as $\psi_1\left(\frac{1}{2}\right)-4\sum_{i=1}^k \frac{1}{(2i-1)^2}$, substituting for the known values of $\psi_1$, reducing and manipulating remaining terms, we obtain:

\begin{equation*}
var(Y_d)= \ln{k} + \gamma + \frac{1}{2k} + \frac{\pi^2}{4} + 2\sum_{i=1}^k \frac{1}{2i-1}.
\end{equation*}
By using \eqref{bound_oddseries}, we obtain the lower and upper bounds for $var(Y_d)$:

\begin{equation}
\label{lu_bound_var_Y_d}
\ln\left(\frac{d}{2}\right) + \gamma + \frac{1}{d}+ \frac{\pi^2}{4}+2+\ln\left(\frac{d}{2}+1\right)<var(Y_d)\leq
\ln\left(\frac{d}{2}\right) + \gamma + \frac{1}{d}+ \frac{\pi^2}{4}+3+\ln\left(\frac{d}{2}-1\right).
\end{equation}
It can be easily shown that $var(Y_d) = 2\ln\left(\frac{d}{2}\right)+o(1)$.\\

Even though the expression of the first two moments of $Y_d$ do not look particularly promising, they will be employed in proving the central limit theorem (CLT) for independent, but not necessarily identically distributed, random variables. However, before diving into the proof, we present a simulation based confirmation of such a result, which provides an interesting link with results for the determinant of random correlation matrices in general.

\subsection{The law of the determinant of random matrices}

The authors of \cite{Nguyen2012} proved that for a $d-$dimensional random matrix $A_d$ whose entries $a_{ij}$ are independent real random variables with mean zero and variance one, the logarithm of $|det (A_d)|$ satisfies the central limit theorem. More precisely:
$$\sup_{x\in R} |P\left(\frac{\log (|det A_d|)- 1/2 \log(d-1)!}{\sqrt{1/2 \log d}}\le x \right) -\Phi(x)| \le \log^{-1/3 +o(1)} d.$$\\
Since the entries of a random correlation matrix are definitely not independent, we cannot use this result as it is. So far, we only have an indication of the order of the first two moments of $Y_d$.

The entries in our correlation matrix $R=(\rho_{ij})_{i,j=1,\ldots,d}$ are $Beta\left(\frac{d}{2}, \frac{d}{2}\right)$ distributed on $(-1,1)$. That is to say, they are random variables with mean 0 and variance $\frac{1}{d+1}$. Then the variables  $\sqrt{d+1} \cdot \rho_{ij}$ have mean zero and variance one. Ignoring for a moment that the $\rho_{ij}'s$ are not independent, taking $a_{ij}= \sqrt{d+1} \cdot \rho_{ij}$, and applying the result above, we obtain that $Y_d$ can be approximated by $O(-d) + \left(\sqrt{O\left(2\ln\left(\frac{d}{2}\right)\right)}\right)N(0,1)$, where $N(0,1)$ is a standard normal variable.\\

The above result is somewhat in line with the previous calculations for the mean and variance of $Y_d$. Moreover, simulating $\frac{Y_d + d}{\sqrt{2\ln\left(\frac{d}{2}\right)}}$, for dimensions 400, and 500, we obtain a distribution that is undistinguishable from a normal distribution (when performing a two-sample Kolmogorov-Smirnov goodness-of-fit hypothesis test) with mean $1.27$, and standard deviation $1.00$. As shown above the mean of $Y_d$ is not $O(-d)$, but $O\left(-d+\frac{1}{2}\ln\left(\frac{d}{2}\right)+1+\frac{\gamma}{2}\right)$. The following section includes the formal proof of the asymptotic normality of $Y_d$.

\begin{figure}
\centering
\subfloat[Normal variable, d=400]{\includegraphics[width=0.45\textwidth]{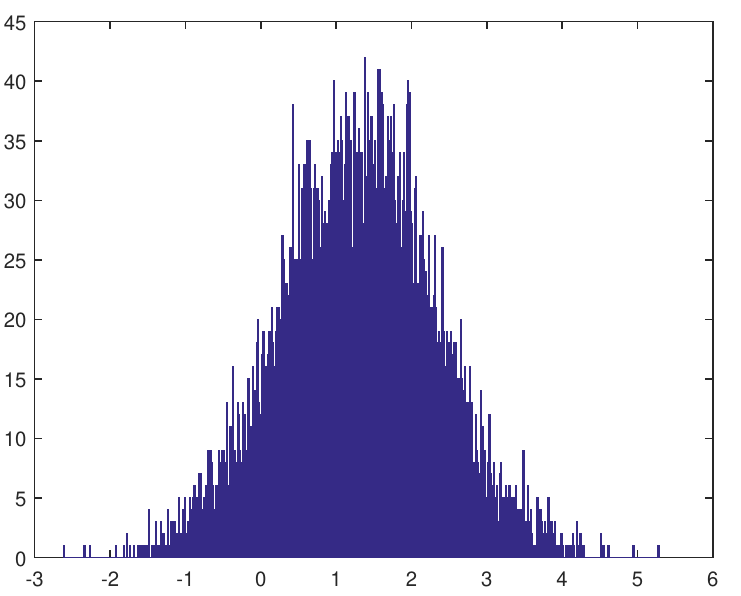}}
\subfloat[$\frac{Y_d + d}{\sqrt{2\ln\left(\frac{d}{2}\right)}}$, d=400]{\includegraphics[width=0.45\textwidth]{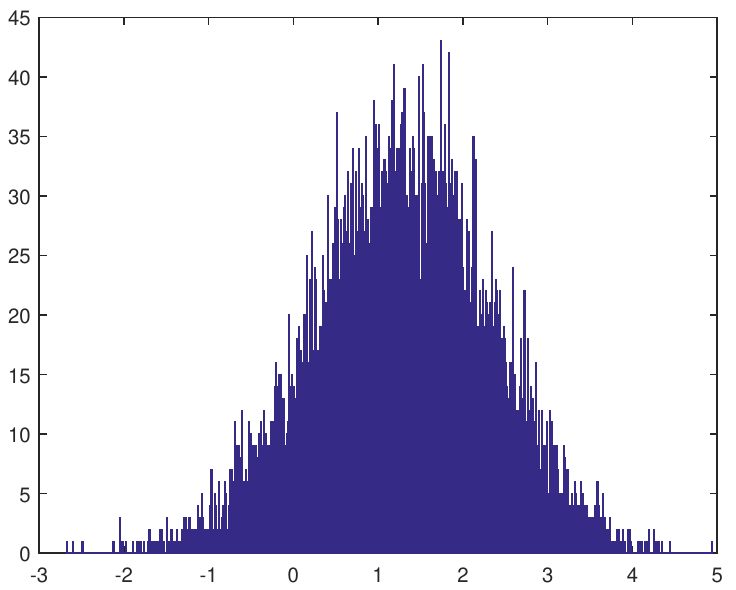}}\\
\subfloat[Normal variable, d=500]{\includegraphics[width=0.45\textwidth]{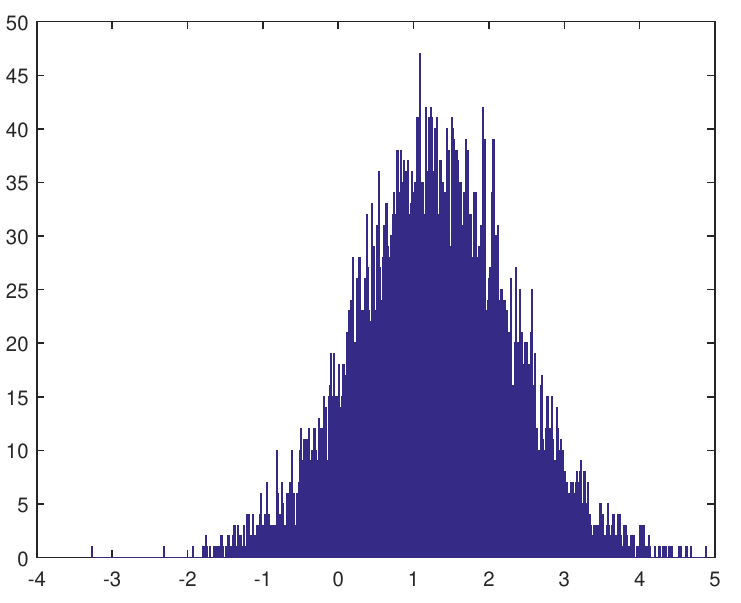}}
\subfloat[$\frac{Y_d + d}{\sqrt{2\ln\left(\frac{d}{2}\right)}}$, d=500]{\includegraphics[width=0.45\textwidth]{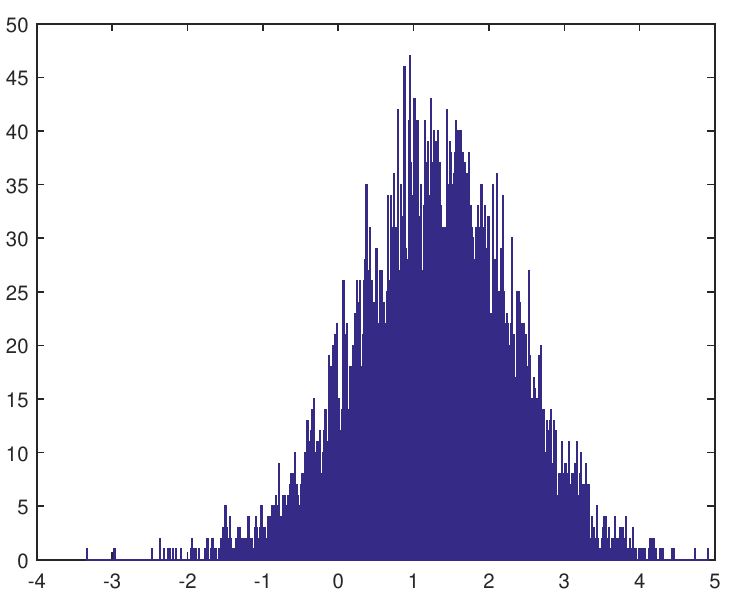}}
\caption{Simulations of normally distributed random variables (a,c) and scaled logarithm of the determinant of a random correlation matrix (b,d), for d=400 and d=500.}
\end{figure}

\subsection{The asymptotic normality of $Y_d$}

In this section we will prove the Lyapunov CLT of the logarithm of the determinant of a random correlation matrix:

\begin{equation}
\label{log_det}
Y_d=\ln D_d=\sum_{j=1}^{d-1} \ln B_j,
\end{equation}
where $B_j \sim Beta\left( \frac{j+1}{2}, \frac{d-j}{2}\right)$ are independent random variables. Note that $Y_d$ involves a sequence of sequences, thus a CLT theorem for triangular arrays is provided below.

\begin{thm}[Asymptotic normality]\label{asympt_normality}
For $d>0$, consider the uniform distribution over the space of $d \times d$ correlation matrices. Let $R_d$ be a random correlation matrix and let $D_d$ be its determinant. Moreover, let $Y_d$ be the logarithm of $D_d$. Then, as $d\rightarrow\infty$,
\[
\frac{Y_d +d+\frac{1}{2}\ln\left(\frac{d}{2}\right)+\frac{\gamma}{2}+1}{\sqrt{2\ln\left(\frac{d}{2}\right)}} \law N(0,1).
\]
\end{thm}

\begin{proof}
Let

\[
\mu_j=E(\ln B_j)=\psi\left( \frac{j+1}{2}\right) - \psi\left( \frac{d+1}{2}\right),
\]
and
\[
\sigma^2_j=Var(\ln B_j)=\psi_1\left( \frac{j+1}{2}\right) - \psi_1\left( \frac{d+1}{2}\right).
\]
where $\psi$ is the digamma function and $\psi_1$ is the trigamma function. The following inequality from \cite{Mitrinovic1970},

\begin{equation}
\label{ineq_psi1}
\frac{1}{x}<\psi_1(x)<\frac{1}{x-1},
\end{equation}
for any $x>1$, implies that

\[
\sigma^2_j<\frac{2}{j-1}-\frac{2}{d+1}<\infty.
\]
For $j=1$, $\psi_1(1)=\frac{\pi^2}{6}$ and
\[
\sigma^2_1<\frac{\pi^2}{6}-\frac{2}{d+1}<\infty.
\]
Thus $\sigma^2_j<\infty$, for all $j=1,\ldots,d-1$. Let

\[
s_{d-1}^2=\sum_{j=1}^{d-1}\sigma^2_j.
\]
It is immediate that the sequence $\frac{Y_d-E(Y_d)}{s_{d-1}}$ has mean 0 and variance 1. To prove its asymptotic normality, we will show that the Lyapunov condition is satisfied, that is, there exists $\delta>0$ such that

\begin{equation}
\label{lyapunov}
\frac{1}{s_{d-1}^{2+\delta}}\sum_{j=1}^{d-1}E\left( \left| \ln B_j -\mu_j \right|^{2+\delta} \right)\to 0,
\end{equation}
for $d\to\infty$.

We will prove \eqref{lyapunov} for $\delta=2$, hence investigating the fourth central moment of the random variable $\ln B_j$, for $j=1,\ldots,d-1$,

\[
E\left[ (\ln B_j-\mu_j)^4 \right]=k_4+3E\left[ (\ln B_j-\mu_j)^2 \right],
\]
where $k_4$ is the fourth cumulant of the distribution of $\ln B_j$, with $B_j \sim Beta\left( \frac{j+1}{2}, \frac{d-j}{2}\right)$. By letting $\psi_{n-1}$ denote the polygamma functions, then

\[
k_n=\psi_{n-1}\left(\frac{j+1}{2}\right)-\psi_{n-1}\left( \frac{d+1}{2}\right),
\]
and we can write
\[
E\left[ (\ln B_j-\mu_j)^4 \right]=\psi_3\left(\frac{j+1}{2}\right)-\psi_3\left( \frac{d+1}{2}\right)+3\psi_1\left(\frac{j+1}{2}\right)-3\psi_1\left( \frac{d+1}{2}\right).
\]
The authors of \cite{Guo_Qi2015} have shown that, for any $x>0$,
\[
\frac{(n-1)!}{(x+1)^n}+\frac{n!}{x^{n+1}}<|\psi_n(x)|<\frac{(n-1)!}{(x+\frac{1}{2})^n}+\frac{n!}{x^{n+1}}.
\]
By using this inequality, it follows that
\[
\begin{split}
\sum_{j=1}^{d-1} E\left( (\ln B_j-\mu_j)^4 \right)<& \sum_{j=1}^{d-1} \Bigg[\frac{16}{(j+2)^3}+\frac{96}{(j+1)^4}+\frac{16}{(d+2)^3}+\frac{96}{(d+1)^4}\\
&\quad \quad +\frac{6}{j+2}+\frac{12}{(j+1)^2}+\frac{6}{d+2}+\frac{12}{(d+1)^2}\Bigg]\\
&\equiv A_d.
\end{split}
\]
To bound the variance term in \eqref{lyapunov}, we employ another inequality provided in \cite{Guo_Qi2015}
\[
\frac{1}{x-\frac{1}{2}}-\frac{1}{12(x-\frac{1}{2})^2}<\psi_1(x)<\frac{1}{x-\frac{1}{2}},
\]
for $x>\frac{1}{2}$. we have that
\[
\psi_1\left( \frac{j+1}{2}\right) - \psi_1\left( \frac{d+1}{2}\right)>\frac{2}{j}-\frac{1}{3j^2}-\frac{2}{d},
\]
which gives that
\[
s_{d-1}^4>\left[\sum_{j=1}^{d-1}\left(\frac{2}{j}-\frac{1}{3j^2}-\frac{2}{d}\right)\right]^2\equiv B_d.
\]
We have that
\[
\frac{1}{s_{d-1}^{4}}\sum_{j=1}^{d-1}E\left( \left| \ln B_j -\mu_j \right|^{4} \right)<\frac{A_d}{B_d}<\frac{A'_d}{B_d},
\]
where
\[
A'_d= \sum_{j=1}^{d-1} \Bigg[\frac{32}{(j+2)^3}+\frac{192}{(j+1)^4}+\frac{12}{j+2}+\frac{24}{(j+1)^2}\Bigg]
\]
Proving that $\frac{A'_b}{B_d}\to0$ for $d\to\infty$ completes then the proof of the theorem. Note that, when letting $d\to\infty$, the denominator and numerator contain convergent p-series. The dominant terms are the harmonic series, and by \eqref{harmonic_no}, they can be approximated by $\ln(d-1)+\gamma$. The squared sum in the denominator gets the limit to zero.
\end{proof}

\subsection{$Y_d$'s moment generating function}

The moment generating function of $Y_d$ can be calculated as follows:

\begin{align}\label{MYd}
M_{Y_d}(t)&=E\left( e^{t\cdot \sum_{j=1}^{d-1} \ln B_j}  \right) = \prod_{j=1}^{d-1} \frac{B\left(t+\frac{j+1}{2},\frac{d-j}{2} \right)}{B\left(\frac{j+1}{2},\frac{d-j}{2} \right)},
\end{align}

where $B(\alpha,\beta)$ is a Beta function.

Using that $\frac{\Gamma(z+a)}{\Gamma(z+b)} \approx z^{a-b}$, as $z\longrightarrow+\infty$ and $S = \frac{d+1}{2}$, and Proposition \ref{prop2} we obtain\footnote{The product is dominated by the higher order terms.}:

\begin{align*}
M_{Y_d}(t)&= \prod_{j=1}^{d-1} \frac{\Gamma\left(\frac{j+1}{2}+t \right)}{\Gamma \left(\frac{j+1}{2}\right)} \cdot \left[\frac{\Gamma(S)}{\Gamma(t+S)} \right]^{d-1}\\
& \approx \prod_{j=1}^{d-1} \left(\frac{j+1}{2}\right)^t
 \cdot \frac{2^{t(d-1)}}{(d+1)^{t(d-1)}}\\
&= \left[ \frac{d!}{(d+1)^{d-1}}\right]^t = \left[E(D_d)\right]^t.
\end{align*}

\noindent In this case, the first derivative of $M_{Y_d}(t)$ evaluated at $t=0$ equals $\ln{E(D_d)}$, hence $\ln{E(D_d)} = E(\ln{D_d})$.
This suggests again a linear behaviour caused by the degenerate distribution of the scaled determinant.

\section{Discussion}

This research is concerned with the asymptotic distribution of the determinant~$D_d$, of a
random correlation matrix sampled from a uniform distribution over the space of $d \times d$ correlation
matrices. We proved that the $d^{th}$ root of the determinant has a degenerate limiting distribution. Several spin-off results proven along the way include expressions for the first two moments of $D_d$ and results about the asymptotic behaviour of the of the $d^{th}$ root of the first two moments of $D_d$. An interesting connection between $E(D_d)$ and elementary norms is established. Another link with the law of the determinant of general random matrices is also investigated.\\
\indent When looking at the logarithm of the determinant of the correlation matrix, we are actually investigating a sum of independent random variables. Using this, the asymptotic normality of the logarithm of the determinant of a random correlation matrix was established.

\section{Appendix}

\noindent The proof of Corollary \ref{main_cor} follows the same lines as the proof of Theorem \ref{mainth}.

\begin{repcor}{main_cor}
Consider the $d \times d$ correlation matrices such that each $\rho_{ij}$ marginally follows a Beta distribution on $(-1, 1)$, with both parameters equal to $\left(\eta - 1 + \frac{d}{2}\right)$, for $\eta > 0$. Let $R_d$ be such a matrix and $D_{d}$ its determinant. Then $D_d^{1/d} \to \frac{1}{e}$, as $d\rightarrow\infty$, in probability.
\end{repcor}

\begin{proof}
Consider the expression for $D_d$ given in Corollary 1. Let $B_j$ denote the $Beta$ variables from the product; $B_j$'s are independent, so the expectation of  $D_d^{1/d}$ can be calculated as a product of expectations of $Beta$ variables raised to the power $\frac{1}{d}$.\\
 \indent Let $\alpha = \eta + \frac{j-1}{2}$, $\beta = \frac{d-j}{2}$, $S = \alpha+\beta=\eta+\frac{d-1}{2}$. Let $B (\alpha,\beta)$ denote the Beta function, and $\Gamma(\cdot)$ denote the Gamma function. Then:

\begin{align*}
E\left(D_d^{1/d}\right) &= E \left(\prod_{j=1}^{d-1} B_j^{1/d}\right)\\
&=\prod_{j=1}^{d-1} E\left(B_j^{1/d}\right)\\
&= \prod_{j=1}^{d-1}\frac{B(\alpha+\frac{1}{d}, \beta)}{B(\alpha,\beta)} = \prod_{j=1}^{d-1} \frac{\Gamma(\alpha + \frac{1}{d}) \cdot \Gamma(\alpha + \beta)}{\Gamma(\alpha ) \cdot \Gamma(\alpha + \beta + \frac{1}{d})}\\
&= \left[ \frac{\Gamma(S)}{\Gamma(S+\frac{1}{d})} \right]^{d-1} \cdot  \prod_{j=1}^{d-1} \frac{\Gamma(\eta +\frac{j-1}{2}+\frac{1}{d})}{\Gamma(\eta+\frac{j-1}{2})}.
\end{align*}

\noindent By \eqref{frac_gamma},

\begin{align*}
E\left(D_d^{1/d}\right) &= \left(S^{-\frac{1}{d}}\right)^{d-1} \cdot \prod_{j=1}^{d-1} \left( \frac{2\eta+j-1}{2}\right)^{\frac{1}{d}}\\
&=\frac{2^{\frac{d-1}{d}}}{(2\eta +d-1)^{\frac{d-1}{d}}} \cdot \frac{[(2\eta +d-2)!]^{\frac{1}{d}}}{[(2\eta -1 )!]^{\frac{1}{d}}}\cdot \frac{1}{2^{\frac{d-1}{d}}}.
\end{align*}

\noindent Using \eqref{stirling} and rearranging the terms gives

\begin{align*}
E \left(D_d^{1/d}\right) &= \frac{1}{[(2\eta -1 )!]^{\frac{1}{d}}}\cdot \frac{1}{e^{\frac{2\eta+d-2}{d}}} \cdot \left(\sqrt{2\pi}\right)^{\frac{1}{d}}\cdot \left(2\eta +d-2 \right)^{\frac{1}{2d}}\\[10pt]
&\cdot \frac{2\eta +d-2}{2\eta+d-1}\cdot \left(2\eta+d-2\right)^{\frac{2\eta-2}{d}}\cdot\left(2\eta+d-1 \right)^{\frac{1}{d}}.
\end{align*}
Then $\displaystyle{\lim_{d\longrightarrow\infty}} E\left(D_d^{1/d}\right)= \frac{1}{e}$. \\
In the same fashion we show that $\displaystyle{\lim_{d\longrightarrow\infty}} E(D_d^{2/d})= \frac{1}{e^2}$. To summarize we have:
\begin{align*}
\lim_{d\longrightarrow\infty} E\left(D_d^{1/d}\right)&= \frac{1}{e}\\
\lim_{d\longrightarrow\infty} \left[E\left(D_d^{1/d}\right)\right]^2 &= \frac{1}{e^2}\\
\lim_{d\longrightarrow\infty} E\left[\left(D_d^{1/d}\right)^2\right] &= \frac{1}{e^2}.
\end{align*}

\noindent The above equalities prove $\displaystyle{\lim_{d\longrightarrow\infty}} var\left(D_d^{1/d}\right) = 0$ and the Corollary.
\end{proof}

\noindent Let us now prove Propositions \ref{prop1}.

\begin{repprop}{prop1}
For a uniform distribution over the space of $d \times d$ correlation matrices, the marginal
distribution of each correlation follows $Beta(d/2, d/2)$ distribution on $(-1, 1)$ and the determinant $D_d$ of
a correlation matrix follows the expression from equation (\ref{det_as1prod}). Then:
\begin{enumerate}
\item $\displaystyle{\lim_{d\longrightarrow\infty}} [E(D_d)]^{\frac{1}{d}}= \frac{1}{e} = \lim_{d\longrightarrow\infty} E(D_d^{1/d})$
\item $\displaystyle{\lim_{d\longrightarrow\infty}} [var(D_d)]^{\frac{1}{d}} = \frac{1}{e^2} = \lim_{d\longrightarrow\infty}\left[E(D_d)^{\frac{1}{d}}\right]^2$.
\end{enumerate}
\end{repprop}

\begin{proof}

The proof of the second equality in both statements can be found in the proof of Theorem \ref{mainth}.
To prove the first equality in 1. calculate $E(D_d)$ as follows:

\begin{equation}
\label{exp_det}
E(D_d)=\prod_{j=1}^{d-1}\frac{\frac{j+1}{2}}{\frac{d+1}{2}} = \frac{d!}{(d+1)^{d-1}}.
\end{equation}

\noindent By \eqref{stirling}, we obtain

\begin{align*}
\left[E(D_d)\right]^{\frac{1}{d}} &= \left[ \frac{d!}{(d+1)^{d-1}}\right]^{\frac{1}{d}}\\
&= \frac{(2 \pi)^{\frac{1}{d}}\cdot d^{\frac{1}{2} \cdot \frac{1}{d}}\cdot d^{d\frac{1}{d}}}{e^{d \cdot \frac{1}{d}} \cdot (d+1)^{\frac{d-1}{d}}}\cdot \left[ 1+O\left(\frac{1}{d}\right) \right]^{\frac{1}{d}}\\
& = (2 \pi)^{\frac{1}{d}} \cdot \frac{d}{d+1} \cdot d^{\frac{1}{2d}}\cdot (d+1)^{\frac{1}{d}} \cdot \frac{1}{e}\cdot \left[ 1+O\left(\frac{1}{d}\right) \right]^{\frac{1}{d}}.
\end{align*}

\noindent Then $\displaystyle\lim_{d\longrightarrow\infty} [E(D_d)]^{\frac{1}{d}} = \frac{1}{e}$.

\noindent Using \eqref{stirling} again, we can calculate:

\begin{align*}
\left[E(D_d)\right]^{\frac{2}{d}} &=\frac{(2 \pi)^{\frac{2}{d}}\cdot d^{\frac{1}{2} \cdot \frac{2}{d}}\cdot d^{d\frac{2}{d}}}{e^{d \cdot \frac{2}{d}} \cdot (d+1)^{\frac{2(d-1)}{d}}}\cdot \left[ 1+O\left(\frac{1}{d}\right) \right]^\frac{2}{d}\\
& = (2 \pi)^{\frac{1}{d}} \cdot \left(\frac{d}{d+1}\right)^{2} \cdot d^{\frac{1}{d}}\cdot (d+1)^{\frac{2}{d}} \cdot \frac{1}{e^2}\cdot \left[ 1+O\left(\frac{1}{d}\right) \right]^\frac{2}{d}
\end{align*}

\noindent which proves  $\displaystyle{\lim_{d\longrightarrow\infty}} \left[E(D_d)^{\frac{1}{d}}\right]^2= \frac{1}{e^2}$.

\noindent To prove Relation 2. from Proposition \ref{prop1} we start from the expression of the determinant $D_d$ given in equation (\ref{det_as1prod}) and consider $(d-1)$ independent, Beta distributed random variables, $B_j \sim Beta\left(\frac{j+1}{2}, \frac{d-j}{2}\right), j=1, \cdots, d-1$. Then

\begin{align}
\label{aici}
var(D_d) &= var\left(\prod_{j=1}^{d-1} B_j\right) = \prod_{j=1}^{d-1} \left( \frac{\frac{j+1}{2} \cdot \frac{j+3}{2}}{\frac{d+1}{2} \cdot \frac{d+3}{2}}\right) - \prod_{j=1}^{d-1} \left(\frac{\frac{(j+1)^2}{4}}{\frac{(d+1)^2}{4}}\right)\nonumber\\
&= \frac{1}{(d+1)^{d-1} \cdot (d+3)^{(d-1)}} \cdot \prod_{j=1}^{d-1}(j+1) \cdot \prod_{j=1}^{d-1}(j+3) - \frac{1}{(d+1)^{2(d-1)}} \cdot \prod_{j=1}^{d-1}(j+1)^2\nonumber\\
&= \frac{d!(d+2)!}{6(d+1)^{d-1}(d+3)^{d-1}} - \frac{(d!)^2}{(d+1)^{2(d-1)}}\\
&= \frac{d!}{(d+1)^{(d-1)}}\left[ \frac{(d+2)!}{6(d+3)^{d-1}} - \frac{d!}{(d+1)^{d-1}}\right]\nonumber\\
&= \frac{(d+1)!}{(d+1)^{d}}\left[ \frac{(d+3)!}{6(d+3)^{d}} - \frac{(d+1)!}{(d+1)^{d}}\right]\nonumber.
\end{align}
\noindent By \eqref{stirling},

\begin{align*}
var(D_d) &= \frac{\sqrt{2\pi (d+1)}(d+1)^{d+1}}{e^{d+1}(d+1)^d}\left[ \frac{\sqrt{2\pi (d+3)}(d+3)^{d+3}}{e^{d+3}6(d+3)^d} - \frac{\sqrt{2\pi (d+1)}(d+1)^{d+1}}{e^{d+1}(d+1)^d}\right]\cdot \left[ 1+O\left(\frac{1}{d}\right) \right]^2\\
&= \frac{\sqrt{2\pi (d+1)}(d+1)}{e^{d+1}}\left[ \frac{\sqrt{2\pi (d+3)}(d+3)^{3}}{6 \cdot e^{d+3}} - \frac{\sqrt{2\pi (d+1)}(d+1)}{e^{d+1}}\right]\cdot\left[ 1+O\left(\frac{1}{d}\right) \right]^2\\
&= \frac{2\pi \cdot (d+1)^{3/2} \cdot (d+1)^{1/2}}{e^{2(d+1)}} \left[ \frac{\sqrt{\frac{d+3}{d+1}}}{6} \cdot \frac{(d+3)^3}{e^2} -(d+1)\right]\cdot\left[ 1+O\left(\frac{1}{d}\right) \right]^2\\
&= \frac{\pi \cdot (d+1)^{3/2}}{3e^{2(d+2)}}\left[ (d+3)^{7/2} -6e^2(d+1)^{3/2} \right]\cdot \left[ 1+O\left(\frac{1}{d}\right) \right]^2.
\end{align*}

\noindent By raising to the $1/d$ power and taking the limit when $d \to \infty$ we obtain $\displaystyle{\lim_{d\longrightarrow\infty}} [var(D)]^{\frac{1}{d}} = \frac{1}{e^2}$.
\end{proof}

\noindent
The proof of Corollary \ref{mom_det} is provided below.

\begin{repcor}{mom_det}
The first two moments of the distribution of the determinant of a $d \times d$ random correlation matrix can be written as follows:

\begin{enumerate}
\item $E(D_d) = \left[E\left(D_d^{1/d}\right)\right]^d + O\left(\frac{1}{d}\right)$
\item $var(D_d) = \left[E(D_d)\right]^2 + O\left(\frac{1}{d}\right)$.
\end{enumerate}
\end{repcor}

\begin{proof}
\noindent By \eqref{exp_det_1/d}

\[
E \left(D_d^{1/d}\right)^d = \sqrt{2\pi d} \cdot \frac{d^d}{(d+1)^{d-1}} \cdot e^{-d}\cdot\left[ 1+O\left(\frac{1}{d}\right) \right]^{d-1}.
\]
By \eqref{exp_det} and \eqref{stirling},

\[
E\left(D_d \right)= \sqrt{2\pi d} \cdot \frac{d^d}{(d+1)^{d-1}} \cdot e^{-d}\cdot\left[ 1+O\left(\frac{1}{d}\right) \right].
\]
It will follow that

\[
E\left(D_d \right) = E \left(D_d^{1/d}\right)^d +O\left(\frac{1}{d}\right).
\]
For the second relationship, notice that, by \eqref{exp_det} and \eqref{stirling},

\[
\begin{split}
\left[E(D_d)\right]^2 = \prod_{j=1}^{d-1}\frac{\frac{j+1}{2}}{\frac{d+1}{2}} &= \frac{d!\cdot d!}{(d+1)^{2(d-1)}}\\
&=\frac{ 2\pi d\cdot d^{2d}\cdot e^{-2d}\cdot}{(d+1)^{2(d-1)}}\cdot \left[ 1+O\left(\frac{1}{d}\right) \right]^2.
\end{split}
\]
Furthermore,
\[
\begin{split}
var(D_d) &= var\left(\prod_{j=1}^{d-1} B_j\right) = \prod_{j=1}^{d-1}  \frac{ (j+1) \cdot (d-j)}{2(d+1)^2\cdot(d+3)}\\
&= \frac{1}{2^{d-1}\cdot(d+1)^{2(d-1)}\cdot(d+3)^{d-1}}\prod_{j=1}^{d-1}(j+1)\cdot(d-j)\\
&= \frac{(d-1)!\cdot d!}{2^{d-1}\cdot(d+1)^{2(d-1)}\cdot(d+3)^{d-1}}\\
&= \frac{2\pi \sqrt{d\cdot(d-1)} \cdot (d-1)^{d-1}\cdot e^{-(d-1)}\cdot d^d\cdot e^{-d}}{2^{d-1}\cdot(d+1)^{2(d-1)}\cdot(d+3)^{d-1}}\cdot\left[ 1+O\left(\frac{1}{d}\right) \right]^2\\
&= \left[E(D_d)\right]^2 \cdot\left[ 1+O\left(\frac{1}{d}\right) \right]^2
\end{split}
\]
This proves the corollary.
\end{proof}

\noindent The relations from Proposition \ref{prop2} find their derivations in the proof of Proposition \ref{prop1} above.

\section*{Acknowledgements}

The author gratefully acknowledges fruitful discussions with, and helpful comments from Frank Redig and Roger Cooke.

\bibliographystyle{plain}

\end{document}